\newtheorem{thm}{Theorem}[]
\newtheorem{rem}[]{Remark}
 \newcommand{\thmref}[1]{Theorem~\ref{#1}}
\newcommand{\R}{{\mathbb R}}
\newcommand{\bee}{\begin{equation*}}
\newcommand{\eee}{\end{equation*}}
\newcommand{\be}{\begin{equation}}
\newcommand{\ee}{\end{equation}}
\newcommand{\pn}{\par\noindent}
\title{Stability of solutions to some evolution problems}
\author{A.G. Ramm \\
\small Department of Mathematics\\[-0.8ex]
\small Kansas State University, Manhattan, KS 66506-2602, USA\\
\small \texttt{ramm@math.ksu.edu}}
\date{}
\begin{document}
Plenary talk at the Chaos 2011 International Conference.

In the book "Topics on Chaotic Systems: Selected Papers from Chaos 2011
International Conference" Editors C.Skiadas, I.Dimotikalis,
Char.Skiadas, World Sci. Publishing, 2012

A. G. Ramm, Stability of solutions to some evolution problems.
% typeset front matter
\maketitle

\begin{abstract}
Large time behavior of solutions to abstract differential equations 
is studied. The corresponding evolution problem is:
$$\dot{u}=A(t)u+F(t,u)+b(t), \quad t\ge 0; \quad u(0)=u_0. \qquad (*)$$
Here $\dot{u}:=\frac {du}{dt}$, $u=u(t)\in H$, $t\in \R_+:=[0,\infty)$,
$A(t)$ is a linear dissipative operator: Re$(A(t)u,u)\le -\gamma(t)(u,u)$, 
$\gamma(t)\ge 0$, $F(t,u)$ is a nonlinear operator,
$\|F(t,u)\|\le c_0\|u\|^p$, $p>1$, $c_0,p$ are constants, $\|b(t)\|\le 
\beta(t),$ $\beta(t)\ge 0$  
is a continuous function.  

Sufficient conditions are given for the solution $u(t)$
to problem (*) to exist for all $t\ge0$, to be bounded uniformly on
$\R_+$, and a bound on $\|u(t)\|$ is given. This bound implies 
the relation $\lim_{t\to \infty}\|u(t)\|=0$ under suitable
conditions on $\gamma(t)$ and $\beta(t)$.

The basic technical tool in this work is the following
nonlinear inequality:
$$ \dot{g}(t)\leq
-\gamma(t)g(t)+\alpha(t,g(t))+\beta(t),\ t\geq 0;\quad g(0)=g_0,
$$
which holds on any interval $[0,T)$ on which $g(t)\ge 0$ exists and has 
bounded derivative
from the right, $\dot{g}(t):=\lim_{s\to +0}\frac{g(t+s)-g(t)}{s}$.  
It is assumed that  $\gamma(t)$, 
and $\beta(t)$ are nonnegative continuous 
functions of $t$ defined on
$\R_+:=[0,\infty)$,  the function $\alpha(t,g)$ is
defined for all $t\in \R_+$,  locally Lipschitz 
with
respect to $g$ uniformly with respect to $t$ on any compact subsets
$[0,T]$, $T<\infty$,   and non-decreasing with respect to $g$. 
If 
there
exists a function $\mu(t)>0$,  $\mu(t)\in C^1(\R_+)$, such that
$$\alpha\left(t,\frac{1}{\mu(t)}\right)+\beta(t)\leq
\frac{1}{\mu(t)}\left(\gamma(t)-\frac{\dot{\mu}(t)}{\mu(t)}\right),\quad
\forall t\ge 0;\quad \mu(0)g(0)\leq 1,$$ then $g(t)$ exists on all of 
$\R_+$, that is $T=\infty$,  and the following estimate holds:
$$0\leq g(t)\le \frac 1{\mu(t)},\quad \forall t\geq 0. $$ 
If $\mu(0)g(0)< 1$, then $0\leq g(t)< \frac 1{\mu(t)},\quad \forall t\geq 
0. $

\end{abstract}
\pn{\\ {\em MSC:}\,\, 26D10;34G20; 37L05;44J05; 47J35; 70K20;\\ 

\pn {\em PACS: }\,\, 02.30Tb. \\

\noindent\textbf{Key words:} Dissipative dynamical systems;  
Lyapunov stability; evolution problems; nonlinear inequality; differential 
equations. }\\

\section{Introduction}
Consider an abstract nonlinear
evolution problem
\be\label{e1}
\dot{u}=A(t)u+F(t,u)+b(t),  \qquad \dot{u}:=\frac {du}{dt},
\ee
\be\label{e2}
u(0)=u_0,\ee
where $u(t)$ is a function with values in a Hilbert space $H$, $A(t)$
is a linear bounded dissipative operator in $H$, which satisfies 
inequality
\be\label{e3} \text{Re}(A(t)u,u)\leq -\gamma(t)\|u\|^2,\qquad t\geq 
0;\qquad \forall u\in H,
\ee
where 
$F(t,u)$ is a nonlinear map in $H$, 
\be\label{e4} \|F(t,u)\|\leq c_0\|u(t)\|^p,\qquad p>1, 
\ee
\be\label{e5}
\|b(t)\|\leq \beta(t),
\ee
$\gamma(t)>0$ and $\beta(t)\ge 0$ are continuous function, defined on all 
of $\R_+:=[0,\infty)$, $c_0>0$ and $p>1$ are constants. 

Recall that a linear operator $A$ in a Hilbert space is called dissipative
if Re$(Au,u)\le 0$ for all $u\in D(A)$, where $D(A)$ is the domain of 
definition of $A$. Dissipative operators are important because they
describe systems in which energy is dissipating, for example, due to 
friction or other physical reasons. Passive nonlinear networks
can be described by equation \eqref{e1} with a dissipative 
linear operator $A(t)$, see \cite{R129} and \cite{R118}, Chapter 3.

Let $\sigma:=\sigma(A(t))$ denote the spectrum of the linear operator
$A(t)$,  $\Pi:=\{z: Re z<0\}$, $\ell:=\{z: Re z=0\}$, and $\rho(\sigma, 
\ell)$  denote the distance between sets $\sigma$ and $\ell$. 
We assume that 
\be\label{e6}
\sigma\subset \Pi,
\ee 
but we allow $\lim_{t\to \infty}
\rho(\sigma,\ell)=0$. This is the basic novel point in our theory.
The usual assumption in stability theory (see, e.g., \cite{DK}) is
$ \sup_{z\in \sigma} Re z\le -\gamma_0$, where $\gamma_0=const>0$.
For example, if $A(t)=A^*(t)$, where $A^*$ is the adjoint operator,
and if the spectrum of $A(t)$ consists of eigenvalues $\lambda_j(t)$, 
$0\ge \lambda_j(t)\ge \lambda_{j+1}(t)$, then, 
we allow $\lim_{t\to \infty} \lambda_1(t)=0$.
This is in contrast with the usual
theory, where the assumption is $\lambda_1(t)\le -\gamma_0$,
$\gamma_0>0$ is a constant, is used. 

Our goal is to give 
sufficient conditions for the existence and uniqueness of the solution
to problem \eqref{e1}-\eqref{e2} for all $t\ge0$, that is, for global 
existence of $u(t)$, for boundedness of $\sup_{t\ge0}\|u(t)\|<\infty,$
or to the relation $\lim_{t\to \infty}\|u(t)\|=0$.

If $b(t)=0$ in \eqref{e1}, then $u(t)=0$ solves equation \eqref{e1}
and $u(0)=0$. This equation is called zero solution to \eqref{e1}
with $b(t)=0$.  

Recall that the zero solution to
equation \eqref{e1} with $b(t)=0$ is called Lyapunov stable if for any 
$\epsilon>0$, however small, one can find a 
$\delta=\delta(\epsilon)>0$, 
such that if
$\|u_0\|\le \delta$, then the solution to Cauchy problem
\eqref{e1}-\eqref{e2} satisfies the estimate $\sup_{t\ge 0}\|u(t)\|\le
\epsilon$. If, in addition, $\lim_{t\to \infty}\|u(t)\|=0$, then
the zero solution to equation \eqref{e6} is called asymptotically stable
in the Lyapunov sense.

If $b(t)\not\equiv 0$, then one says that  \eqref{e1}-\eqref{e2} is the 
problem with persistently acting perturbations.     
The zero solution is called Lyapunov stable for problem 
\eqref{e1}-\eqref{e2}
with persistently acting perturbations if for any $\epsilon>0$, however 
small, one can find a $\delta=\delta(\epsilon)>0$, such that if
$\|u_0\|\le \delta$, and $\sup_{t\ge 0}\|b(t)\|\le\delta$, then the 
solution to Cauchy problem
\eqref{e1}-\eqref{e2} satisfies the estimate $\sup_{t\ge 0}\|u(t)\|\le
\epsilon$.  

The approach, developed in this work, consists of reducing the 
stability problems to some nonlinear differential inequality
and estimating the solutions to this inequality.

In Section 2 the formulation and a proof of two theorems, containing the 
result concerning this inequality and its discrete analog, are given.
In Section 3 some results concerning Lyapunov stability  
of zero  solution to equation  \eqref{e1} are obtained.

The results of this paper are based on the works \cite{R593}-
\cite{R118}.

In the theory of chaos one of the reasons for the chaotic behavior
of a solution to an evolution problem to
appear is the lack of stability of solutions to this problem
(\cite{Da}, \cite{De}). The results presented in Section 3 can be 
considered as sufficient conditions for chaotic behavior not to appear
in the evolution system described by problem \eqref{e1}-\eqref{e2}.

\section{Differential inequality}

In this Section a self-contained proof is given of an
estimate for solutions of a nonlinear inequality 
\be\label{e7}
\dot{g}(t)\leq -\gamma(t)g(t)+\alpha(t,g(t))+\beta(t),\ t\geq 0;\
g(0)=g_0;\quad \dot{g}:=\frac{dg}{dt}.  \ee
In Section 3 some of many possible applications of this estimate 
(estimate  \eqref{e11})  are demonstrated.

%Denote $\R_+:=[0,\infty)$.
It is  not assumed a priori that solutions $g(t)$ to inequality
\eqref{e7} are defined on all of $\R_+$, that is, that  these solutions 
exist globally. In Theorem 1 we give sufficient conditions for the 
global existence of 
$g(t)$.
Moreover, under these conditions  a bound on $g(t)$ is given, see
estimate \eqref{e11} in Theorem 1. 
This bound yields the relation $\lim_{t\to \infty}g(t)=0$
if $\lim_{t\to \infty}\mu(t)=\infty$ in \eqref{e11}. 

Let us formulate
our assumptions.

\noindent ${\it Assumption\,\, A_1).}$ We assume that the function 
$g(t)\geq 
0$
is defined on some interval $[0,T)$, has a bounded derivative
$\dot{g}(t):=\lim_{s\to +0}\frac{g(t+s)-g(t)}{s}$ from the 
right at any point of this interval, and  $g(t)$ satisfies 
inequality \eqref{e7} at all $t$ at which $g(t)$ is defined. The 
functions 
$\gamma(t)$, and $\beta(t)$,  are continuous, non-negative,
defined on all of $\R_+$.  The function 
$\alpha(t,g)\ge 0$ is 
continuous on $\R_+\times \R_+$, nondecreasing with respect to $g$,
and locally Lipschitz with respect to $g$. This 
means that 
$$\alpha(t,g)\ge \alpha(t,h)\quad \text{if}	\quad g\ge h,$$ and 
\be\label{e8}
|\alpha(t,g)-\alpha(t,h)|\leq L(T,M)|g-h|,
\ee
if $t\in[0,T]$, $|g|\leq M$ and $|h|\leq  M$, $M=const>0$,
where $L(T,M)>0$ is a constant independent of $g$, $h$, and $t$.

\noindent ${\it Assumption\,\, A_2).}$ There exists a $C^1(\R_+)$
function $\mu(t)>0$,  such that 
\be\label{e9}
\alpha\left(t,\frac{1}{\mu(t)}\right)+\beta(t)\leq
\frac{1}{\mu(t)}\left(\gamma(t)-\frac{\dot{\mu}(t)}{\mu(t)}\right),\quad 
\forall t\ge 0, \ee
and
\be\label{e10} \mu(0)g(0)< 1. \ee

If $\mu(0)g(0)\le 1$, then the 
inequality  $g(t)< \frac 1 {\mu(t)}$ in Theorem 
1 in formula \eqref{e11} should be replaced by $g(t)\le \frac 1 {\mu(t)}$. 

\begin{thm}\label{thm1}
If ${\it Assumptions\,  A_1)}$ and ${\it A_2)}$ hold, then any solution 
$g(t)\ge 0$ 
to inequality \eqref{e7} exists on all of $\R_+$, i.e., $T=\infty$, 
and satisfies the following estimate:
\be\label{e11}0\leq g(t)<\frac{1}{\mu(t)}\quad \forall t\in \R_+. \ee
If $ \mu(0)g(0)\le 1$, then $0\leq g(t)\le \frac{1}{\mu(t)}\quad \forall 
t\in \R_+.$
\end{thm}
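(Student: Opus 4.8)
The plan is to compare $g$ with the explicit barrier $v(t):=1/\mu(t)$ and to rule out a first crossing. Since $\mu\in C^1(\R_+)$ is positive, $\dot v=-\dot\mu/\mu^2$, so hypothesis \eqref{e9} is precisely the statement that $v$ is a supersolution of the equation underlying \eqref{e7},
\[
\dot v(t)\ge -\gamma(t)v(t)+\alpha\big(t,v(t)\big)+\beta(t),\qquad t\ge0,
\]
while $g$ is a subsolution, with the strict initial gap $g(0)<v(0)$ furnished by \eqref{e10}. Throughout, $\dot g$ denotes the right derivative from Assumption $A_1$, and the only calculus tool I would use is the elementary fact that a continuous function with nonnegative right derivative on an interval is nondecreasing; this substitutes for the fundamental theorem, since $g$ need not be differentiable.

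For the strict conclusion, suppose the set $\{t\in[0,T):g(t)\ge v(t)\}$ were nonempty and let $t_1$ be its infimum. By continuity and \eqref{e10} one has $t_1>0$, $g(t_1)=v(t_1)$, and $g<v$ on $[0,t_1)$. Set $\phi:=v-g$, so $\phi>0$ on $[0,t_1)$ and $\phi(t_1)=0$. Subtracting the subsolution inequality from the supersolution inequality and using that $\alpha(t,\cdot)$ is nondecreasing — on $[0,t_1)$ we have $g<v$, hence $\alpha(t,g)\le\alpha(t,v)$ — gives, for $t\in[0,t_1)$,
\[
\dot\phi(t)\ge -\gamma(t)\phi(t)+\big(\alpha(t,v(t))-\alpha(t,g(t))\big)\ge -\gamma(t)\phi(t).
\]
Hence $\tfrac{d^+}{dt}\big(\phi(t)e^{\int_0^t\gamma(s)\,ds}\big)\ge0$, so this product is nondecreasing and $\phi(t)\ge\phi(0)\exp\!\big(-\int_0^t\gamma(s)\,ds\big)$ on $[0,t_1)$. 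Letting $t\to t_1^-$ and using continuity of $\phi$ together with $\int_0^{t_1}\gamma<\infty$ (as $\gamma$ is continuous on the compact $[0,t_1]$), I obtain $\phi(t_1)\ge\phi(0)e^{-\int_0^{t_1}\gamma}>0$, contradicting $\phi(t_1)=0$. Thus $g<1/\mu$ on all of $[0,T)$, which is \eqref{e11}.

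Global existence then follows from this a priori bound: on $[0,T)$ we have $0\le g(t)<1/\mu(t)$, and $1/\mu$ is bounded on every compact subinterval; were $T<\infty$, $g$ would remain bounded up to $T$, the right-hand side of \eqref{e7} would be bounded there, and the solution would continue beyond $T$, contradicting maximality of $T$. Hence $T=\infty$.

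Finally, the borderline case $\mu(0)g(0)\le1$ yields only $\phi(0)\ge0$, so the Gronwall bound above gives merely $g\le1/\mu$. To confirm that no strict overshoot occurs, I would argue by a first exit rather than a first crossing: if $g(t^*)>v(t^*)$ for some $t^*$, put $t_0:=\sup\{t<t^*:g(t)\le v(t)\}$, so that $g(t_0)=v(t_0)$ and $g>v$ on $(t_0,t^*]$; there the local Lipschitz bound \eqref{e8} gives, with $\chi:=g-v>0$,
\[
\dot\chi(t)\le -\gamma(t)\chi(t)+\big(\alpha(t,g)-\alpha(t,v)\big)\le L\,\chi(t),
\]
whence $\chi(t)e^{-Lt}$ is nonincreasing and $\chi(t_0)=0$ forces $\chi\le0$ on $(t_0,t^*]$, a contradiction. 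I expect the main obstacle to be purely technical: organizing every monotonicity and Gronwall step in terms of right derivatives and correctly fixing the sign of the $\alpha$-difference at the crossing, where the monotonicity of $\alpha$ (for the strict case) and its local Lipschitz property (for the borderline case) are exactly what is needed.
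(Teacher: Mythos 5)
Your argument is correct, and at its core it runs on the same engine as the paper's proof, but it is packaged differently and in one respect goes further. The paper substitutes $g=v/a$ with the integrating factor $a(t)=e^{\int_0^t\gamma(s)ds}$ and compares $v$ with $\eta=a/\mu$, propagating $v<\eta$ forward by integrating $\dot v\le \dot\eta$; your quantity $\phi(t)e^{\int_0^t\gamma(s)ds}$ is exactly the paper's $\eta(t)-v(t)$, so your Gronwall step and the paper's integration are the same computation, and your first-crossing contradiction replaces the paper's ``repeat the argument from $T$'' propagation --- a matter of bookkeeping, with your version arguably cleaner because it localizes precisely where the monotonicity of $\alpha$ is used. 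Two genuine differences are worth recording. First, you actually prove the borderline case $\mu(0)g(0)\le 1$ by a first-exit argument using the local Lipschitz bound \eqref{e8}; the paper asserts this case without proof and uses \eqref{e8} only for local solvability of the majorant equation \eqref{e19}, so your exit argument supplies reasoning the paper omits, and it correctly identifies that monotonicity of $\alpha$ serves the strict case while the Lipschitz property is what the borderline case needs. Second, on global existence you apply the blow-up alternative directly to ``the solution of the inequality,'' which is slightly loose: a function merely satisfying \eqref{e7} on $[0,T)$ has no intrinsic maximal interval of existence, and the paper is careful to route this step through the genuine ODE \eqref{e19}, to whose solution $w$ Hartman's continuation theorem applies, with $g$ existing wherever $w$ does. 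Your a priori bound $0\le g<1/\mu$, with $1/\mu$ bounded on compact intervals, is exactly the right estimate, but to make the continuation step airtight you should attach it to the majorizing equation \eqref{e19} (or, in the applications of Section 3, to the evolution equation \eqref{e1} itself) rather than to the inequality alone; with that adjustment your proof is complete.
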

\begin{rem}\label{rem1}
If $\lim_{t\to \infty} \mu(t)=\infty$, then $\lim_{t\to
\infty}g(t)=0$. 
\end{rem}

\noindent {\it Proof of \thmref{thm1}.} Let 
\be\label{e12}
g(t)=\frac{v(t)}{a(t)},\quad a(t):=e^{\int_0^t\gamma(s)ds}, \ee
\be\label{e13} \eta(t):=\frac{a(t)}{\mu(t)},\quad
\eta(0)=\frac{1}{\mu(0)}>g(0). \ee Then
inequality \eqref{e7} reduces to 
\be\label{e14} \dot{v}(t)\leq
a(t)\alpha\left(t,\frac{v(t)}{a(t)}\right)+a(t)\beta(t),\quad t\geq 0; 
\quad
v(0)=g(0). \ee 
One has 
\be\label{e15}
\dot{\eta}(t)=\frac{\gamma(t)a(t)}{\mu(t)}-\frac{\dot{\mu}(t)a(t)}{\mu^2(t)}
=\frac{a(t)}{\mu(t)}\left(\gamma(t)-\frac{\dot{\mu}(t)}{\mu(t)}\right).
\ee 
From \eqref{e9}, \eqref{e14}-\eqref{e15}, one gets
\be\label{e16} v(0)<\eta(0), \qquad
\dot{v}(0)\le \dot{\eta}(0), 
\ee
so there is an interval $[0,T)$ such that
\be\label{e17}
0\leq v(t)<\eta(t),\quad \forall t\in[0,T). \ee 
Inequality \eqref{e17} is equivalent to the inequality
\be\label{e18}
0\leq g(t)<\frac 1 {\mu(t)}, \qquad \forall t\in[0,T).\ee 
Inequality \eqref{e17} holds on the maximal interval $[0,T_{max})$ of the 
existence of $v$. Indeed, from \eqref{e18}, \eqref{e9}, \eqref{e14}, 
\eqref{e15}, and the assumption
that $\alpha(t,g)$ is nondecreasing with respect to $g$, it follows that
\eqref{e17} implies $\dot{v}(t)\leq \dot{\eta}(t)$ for $t\in [0,T)$.
Integrating this inequality, one gets $v(T)+\eta(0)-v(0)\leq \eta(T)$.
Since $\eta(0)=\frac 1 {\mu(0)}>g(0)=v(0)$, it follows that 
$v(T)<\eta(T)$. Thus, one can argue 
as before, replacing $0$ by $T$,  and increase the interval $[0,T)$ to the 
maximal interval $[0,T_{max})$ of the existence of $v$. We denote 
$T_{max}$ by $T$. 

Let us prove that $T=\infty$.  The right-hand side
of inequality \eqref{e17} is defined for all  $t\ge 0$.
The function $g(t)$, a solution to inequality \eqref{e7}, exists
on every interval on which $v(t)$ exists, and $v(t)$, the solution to
inequality \eqref{e14}, exists on every interval on which the solution
$w(t)$ to the problem
\begin{equation}
\label{e19}
\dot{w}(t)=a(t)\left[\alpha\left(t,\frac {w(t)}{a(t)}\right) + \beta 
(t)\right],\qquad
w(0)=v(0),
\end{equation}
exists.

We have proved that the solution to problem \eqref{e19}
(which  is a solution to problem \eqref{e14} as well) satisfies
the estimate
\begin{equation}
\label{e20}
0\le w(t)\le \frac {a(t)}{\mu(t)}
\end{equation}
on every interval $[0,T)$ on
which  $w$ exists. We claim that {\it estimate \eqref{e20} implies that 
$w$
exists for all $t\ge 0$, in other words, that $T=\infty$.}

Indeed, according to the known result (see, e.g., \cite{H},
Theorem 3.1 in Chapter 2), if 
%the function $\alpha(t,\frac {w(t)}{a(t)})$  
%is locally Lipschitz with respect to $w$ and 
the maximal interval $[0,T)$ of the
existence of the solution to problem \eqref{e19} is finite,
that is $T<\infty$, then $\lim_{t\to T-0}w(t)=\infty$. This, however, 
cannot happen because
in the inequality  \eqref{e20} the function $\frac 
{a(t)}{\mu(t)}$
is bounded for every $t\ge 0$.

Another argument, proving that $T=\infty$, can be given.
It follows from \eqref{e17} that there is a sequence $t_n<T$, 
$\lim_{n\to \infty}t_n= T$, such that $\lim_{n\to \infty}v(t_n)$
exists and is finite. Let us denote $\lim_{n\to \infty}v(t_n)=v(T)$,
$v(T)\leq \eta(T)$. This limit does not depend on the choice of the
sequence $t_n$, converging to $T$, because the derivative of 
$v(t)$ is bounded on $[0,T)$.
Since $\alpha(t,\frac {w(t)}{a(t)})$ is locally 
Lipschitz with respect to $w$, there exists a solution to 
equation \eqref{e19} with the initial data $w(T)=v(T)$ on the interval
$[T,T+\epsilon)$, for some $\epsilon>0$. Therefore $[0,T]$ is not the 
maximal interval of the existence of $w(t)$, unless $T=\infty$.

\thmref{thm1} is proved.\hfill $\Box$

Let us formulate and prove a discrete version of \thmref{thm1}.

\begin{thm}\label{thm2}
Assume that $g_n\geq 0$, $\alpha(n,g_n)\geq 0,$ 
\be\label{e21}
g_{n+1}\leq (1-h_n\gamma_n)g_n+h_n\alpha(n,g_n)+h_n\beta_n;\quad
h_n>0,\quad 0<h_n\gamma_n<1,\ee and $\alpha(n,g_n)\geq \alpha(n,p_n)$ if
$g_n\geq p_n$. If there exists a sequence $\mu_n> 0$ such that
\be\label{e22} \alpha(n,\frac{1}{\mu_n}) +\beta_n\leq
\frac{1}{\mu_n}(\gamma_n-\frac{\mu_{n+1}-\mu_n}{h_n\mu_n}),
\ee
and
\be\label{e23} g_0\leq \frac{1}{\mu_0}, \ee 
then 
\be\label{e24}
0\leq g_n\leq \frac{1}{\mu_n}, \qquad \forall n\geq 0. \ee
\end{thm}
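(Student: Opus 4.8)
The plan is to prove Theorem 2 by induction on $n$, mirroring the continuous argument of Theorem 1 but exploiting the fact that the discrete recurrence lets me pass directly from the bound at step $n$ to the bound at step $n+1$. The base case $n=0$ is exactly the hypothesis \eqref{e23}, namely $0\le g_0\le \frac{1}{\mu_0}$. For the inductive step I assume $0\le g_n\le \frac{1}{\mu_n}$ and seek to deduce $g_{n+1}\le \frac{1}{\mu_{n+1}}$ (nonnegativity of $g_{n+1}$ being given as part of the hypotheses on the sequence).

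The key computation starts from the recurrence \eqref{e21}. Since $0<h_n\gamma_n<1$, the coefficient $1-h_n\gamma_n$ is positive, so I may use the monotonicity of $g\mapsto \alpha(n,g)$ together with the inductive bound $g_n\le \frac{1}{\mu_n}$ to estimate every term on the right-hand side. First I would write
\begin{equation*}
g_{n+1}\le (1-h_n\gamma_n)\frac{1}{\mu_n}+h_n\alpha\!\left(n,\frac{1}{\mu_n}\right)+h_n\beta_n,
\end{equation*}
using $g_n\le\frac{1}{\mu_n}$ in the first term, $\alpha(n,g_n)\le\alpha(n,\frac{1}{\mu_n})$ in the second (valid because $\alpha$ is nondecreasing in its second argument), and $\beta_n\ge 0$ passing through unchanged. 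Next I would invoke the structural hypothesis \eqref{e22}, which bounds $\alpha(n,\frac{1}{\mu_n})+\beta_n$ from above by $\frac{1}{\mu_n}\bigl(\gamma_n-\frac{\mu_{n+1}-\mu_n}{h_n\mu_n}\bigr)$. Substituting this into the displayed inequality gives
\begin{equation*}
g_{n+1}\le (1-h_n\gamma_n)\frac{1}{\mu_n}+h_n\cdot\frac{1}{\mu_n}\left(\gamma_n-\frac{\mu_{n+1}-\mu_n}{h_n\mu_n}\right).
\end{equation*}

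The remainder is an algebraic simplification that I expect to collapse cleanly. Expanding the right-hand side, the $\frac{h_n\gamma_n}{\mu_n}$ terms cancel, leaving $\frac{1}{\mu_n}-\frac{\mu_{n+1}-\mu_n}{\mu_n^2}$; I would then bound this by $\frac{1}{\mu_{n+1}}$. The cleanest route is to observe that for positive reals the inequality $\frac{1}{\mu_n}-\frac{\mu_{n+1}-\mu_n}{\mu_n^2}\le\frac{1}{\mu_{n+1}}$ is equivalent, after clearing the positive denominators $\mu_n^2\mu_{n+1}$, to $\mu_n\mu_{n+1}-(\mu_{n+1}-\mu_n)\mu_{n+1}\le\mu_n^2$, i.e. to $-(\mu_{n+1}-\mu_n)^2\le 0$, which always holds. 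This step — controlling the sign of the residual $(\mu_{n+1}-\mu_n)^2$ term — is the only genuinely nontrivial point, and it is precisely the discrete analog of why $\frac{\dot\mu}{\mu}$ appears rather than a sharper expression in the continuous case; it reflects the loss incurred by the convexity of $t\mapsto 1/t$. Completing the induction then yields \eqref{e24} for all $n\ge 0$, and the proof is finished. \hfill $\Box$
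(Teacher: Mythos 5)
Your proof is correct and follows essentially the same route as the paper's own proof: induction on $n$, using the positivity of $1-h_n\gamma_n$ and the monotonicity of $\alpha$ in its second argument to pass to $g_{n+1}\le (1-h_n\gamma_n)\frac{1}{\mu_n}+h_n\alpha\left(n,\frac{1}{\mu_n}\right)+h_n\beta_n$, then invoking \eqref{e22} and reducing the resulting algebraic inequality to $-(\mu_{n+1}-\mu_n)^2\le 0$. The paper performs the identical cancellation and final reduction, so there is nothing to add.
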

\begin{proof}
For $n=0$ inequality \eqref{e24} holds because of \eqref{e23}.
Assume that it holds for all $n\leq m$ and let us check that then it holds
for $n=m+1$. If this is done, \thmref{thm2} is proved. 

Using the
inductive assumption, one gets: 
\bee g_{m+1}\leq
(1-h_m\gamma_m)\frac{1}{\mu_m}+h_m\alpha(m,\frac{1}{\mu_m})+h_m\beta_m.
\eee 
This and inequality \eqref{e22} imply: \bee\begin{split} g_{m+1}&\leq
(1-h_m\gamma_m)\frac{1}{\mu_m}+h_m\frac{1}{\mu_m}(\gamma_m-\frac{\mu_{m+1}-
\mu_m}{h_m\mu_m})\\
&=\frac{\mu_mh_m-\mu_mh^2_m\gamma_m+h^2_m\gamma_m\mu_m-h_m\mu_{m+1}+h_m\mu_m
}{\mu^2_mh_m}\\
&=\frac{2\mu_mh_m-h_m\mu_{m+1}}{\mu_m^2h_m}=\frac{2\mu_m-\mu_{m+1}}{\mu^2_m}=
\frac{1}{\mu_{m+1}}+\frac{2\mu_m-\mu_{m+1}}{\mu^2_m}-\frac{1}{\mu_{m+1}}.
\end{split}\eee 
The proof is completed if one checks that
\bee \frac{2\mu_m-\mu_{m+1}}{\mu^2_m}\leq \frac{1}{\mu_{m+1}}, \eee
or, equivalently, that 
\bee 2\mu_m\mu_{m+1}-\mu^2_{m+1}-\mu^2_m\leq 0. 
\eee 
The last
inequality is obvious since it can be written as 
$$-(\mu_m-\mu_{m+1})^2\le 0.$$
\thmref{thm2} is proved.
\end{proof}
\thmref{thm2} was formulated in \cite{R593} and proved in \cite{R558}.
We included for completeness a proof, which  differs from the one in 
\cite{R558} only slightly.

\section{Stability results}

In this Section we develop a method for a study of stability 
of solutions to the evolution problems described by the Cauchy
problem \eqref{e1}-\eqref{e2} for abstract differential equations
with a dissipative bounded  linear operator $A(t)$ and a nonlinearity 
$F(t,u)$ satisfying inequality \eqref{e4}. Condition \eqref{e4} means that 
for
sufficiently small $\|u(t)\|$ the nonlinearity is of the higher
order of smallness than $\|u(t)\|$. We also study the large time behavior 
of the solution to problem \eqref{e1}-\eqref{e2} with 
persistently acting perturbations $b(t)$. 

In this paper we assume that
$A(t)$ is a bounded linear dissipative operator, but our methods are
valid also for unbounded linear dissipative operators $A(t)$,
for which one can prove global existence of the solution to
problem \eqref{e1}-\eqref{e2}. We do not go into further detail
in this paper.

Let us formulate the first stability result. 

{\bf Theorem 3.} {\it Assume that Re$(Au,u)\le -k\|u\|^2$
$\forall u\in H$, $k=const>0$, and inequality \eqref{e3} holds with 
$\gamma(t)=k$. 
Then the solution
to problem \eqref{e1}-\eqref{e2} with $b(t)=0$ satisfies an esimate
$\|u(t)\|=O(e^{-(k-\epsilon)t)}$ as $t\to \infty$. Here
$0<\epsilon< k$ can be chosen arbitrarily small if
$\|u_0\|$ is sufficiently small.}

This theorem implies asymptotic stability in the sense of Lyapunov of
the zero solution to equation \eqref{e1} with $b(t)=0$. Our proof of 
Theorem 3 is new and very short.

{\it Proof of Theorem 3}.

Multiply equation \eqref{e1} (in which $b(t)=0$ is assumed) by $u$, denote
$g=g(t):=\|u(t)\|$, take the real part, and use  assumption
\eqref{e3} with $\gamma(t)=k>0$, to get
\begin{equation}
\label{e25}
g\dot{g}\le -kg^2+c_0g^{p+1}, \qquad p>1.
\end{equation}
If $g(t)>0$ then the derivative $\dot{g}$ does exist, and 
$$\dot{g}(t)=Re  \big(\dot{u}(t), \frac {u(t)}{\|u(t)\|}\big),$$ 
as one can check. 
If $g(t)=0$ on an open subset of $\R_+$, then
the derivative $\dot{g}$ does exist on this subset and $\dot{g}(t)=0$
on this subset. If $g(t)=0$ but in in any neighborhood $(t-\delta,
t+\delta)$ there are points at which $g$ does not vanish,
then by $\dot{g}$ we understand the derivative from the right,
that is,
$$\dot{g}(t):= \lim_{s\to +0}\frac {g(t+s)-g(t)}{s}=\lim_{s\to +0}\frac
{g(t+s)}{s}.$$
This limit does exist and is equal to $\|\dot{u}(t)\|$.
Indeed, the function $u(t)$ is continuously differentiable,
so
$$\lim_{s\to +0}\frac {\|u(t+s)\|}{s}=\lim_{s\to +0}
\frac{\|s\dot{u}(t)+o(s)\|}{s}=\|\dot{u}(t)\|.$$
The assumption about the existence of the bounded derivative $\dot{g}(t)$
from the right in Theorem 3 was made because the function $\|u(t)\|$
does not have, in general, the derivative in the usual sense at the points
$t$ at which $\|u(t)\|=0$, no matter how smooth the function $u(t)$
is at the point $\tau$. Indeed, 
$$\lim_{s\to -0}\frac {\|u(t+s)\|}{s}=\lim_{s\to -0}
\frac{\|s\dot{u}(t)+o(s)\|}{s}=-\|\dot{u}(t)\|,$$
because $\lim_{s\to -0}\frac {|s|}{s}=-1$. Consequently,
the right and left derivatives of $\|u(t)\|$ at the point $t$
at which $\|u(t)\|=0$ do exist, but are different. Therefore,
the derivative of $\|u(t)\|$ at the point $t$
at which $\|u(t)\|=0$ does not exist in the usual sense.

However, as we have proved above, the derivative
$\dot{g}(t)$ from the right does exist always, provided that $u(t)$ is 
continuously
differentiable at the point $t$.

Since $g\ge 0$, inequality \eqref{e25}  yields inequality \eqref{e7}
with
$\gamma(t)=k>0$, $\beta(t)=0$, and $\alpha(t,g)=c_0 g^p$, $p>1$.
Inequality \eqref{e9} takes the form
\begin{equation}
\label{e26}
\frac {c_0}{\mu^p(t)}\leq \frac 1 {\mu(t)}\left(k-\frac
{\dot{\mu}(t)}{\mu(t)} \right), \qquad  \forall t\ge 0.
\end{equation}
Let
\begin{equation}
\label{e27}
\mu(t)=\lambda e^{bt}, \qquad \lambda,b=const>0.
\end{equation}
We choose the constants $\lambda$ and $b$ later.
Inequality \eqref{e9}, with $\mu$ defined in \eqref{e27}, takes the form
\begin{equation}
\label{e28}
\frac {c_0}{\lambda^{p-1}e^{(p-1)bt}} +b\leq k, \qquad \forall
t\ge 0.
\end{equation}
This inequality holds if it holds at $t=0$, that is, if
\begin{equation}
\label{e29}
\frac {c_0}{\lambda^{p-1}} +b\leq k.
\end{equation}
Let $\epsilon>0$ be arbitrary small number. Choose
$b=k-\epsilon>0$. Then  \eqref{e29} holds if
\begin{equation}
\label{e30}
\lambda\geq \big(\frac{c_0}{\epsilon} \big)^{\frac 1 {p-1}}.
\end{equation}
Condition \eqref{e10} holds if
\begin{equation}
\label{e31}
\|u_0\|=g(0)\le \frac 1 {\lambda}.
\end{equation}
We choose $\lambda$ and $b$ so that inequalities \eqref{e30}
and \eqref{e31} hold. This is always possible if $b<k$ and $\|u_0\|$
is sufficiently small.

By Theorem 1, if  inequalities  \eqref{e29} and \eqref{e31} hold, then 
one gets estimate  \eqref{e11}:
\begin{equation}
\label{e32}
0\le g(t)=\|u(t)\|\le \frac {e^{-(k-\epsilon)t}} {\lambda},\qquad
\forall t\ge 0.
\end{equation}
Theorem 3 is proved. \hfill $\Box$

{\bf Remark 1.} {\it One can formulate the result differently.
Namely, choose  $\lambda=\|u_0\|^{-1}$. Then inequality \eqref{e31} 
holds, and becomes an equality.
Substitute this $\lambda$ into \eqref{e29} and get
$$c_0\|u_0\|^{p-1}+b\leq k.$$ 
Since the choice of the constant $b>0$ is
at our disposal, this inequality can always be satisfied if 
$c_0\|u_0\|^{p-1}<k$.
Therefore, condition 
$$c_0\|u_0\|^{p-1}<k$$ 
is a sufficient condition for
the estimate 
$$\|u(t)\|\le \|u_0\|e^{-(k-c_0\|u_0\|^{p-1})t},$$
to hold, provided  that $c_0\|u_0\|^{p-1}<k$. 
}

Let us formulate the second stability result. 

{\bf Theorem 4.} {\it Assume that inequalities \eqref{e3}-\eqref{e5} 
hold and
\begin{equation}
\label{e33}
\gamma(t)=\frac {c_1}{(1+t)^{q_1}}, \quad q_1\le 1; \quad c_1,q_1=const>0.
\end{equation}
Suppose that $\epsilon\in (0,c_1)$ is an arbitrary small fixed number,
$$\lambda\ge \left(\frac{c_0}{\epsilon}\right)^{1/(p-1)} \quad \text{ 
and}\quad \|u(0)\|\le \frac {1}{\lambda}.$$
Then the unique solution to \eqref{e1}-\eqref{e2} with $b(t)=0$  
exists on all of $\R_+$ and
\begin{equation}
\label{e34}
0\le \|u(t)\|\le \frac {1}{\lambda(1+t)^{c_1-\epsilon}},\qquad \forall 
t\ge 0. 
%\quad \epsilon\in (0,c_1).
\end{equation}
}

Theorem 4 gives the size of the initial data, namely, $\|u(0)\|\le \frac 
{1}{\lambda}$, for which estimate \eqref{e34} holds. For a fixed 
nonlinearity $F(t,u)$, that is, for a fixed constant $c_0$ from  
assumption \eqref{e4}, the maximal size of $\|u(0)\|$ is determined by the 
minimal size of $\lambda$.

The minimal size of $\lambda$ is determined by the inequality 
$\lambda\ge \left(\frac{c_0}{\epsilon}\right)^{1/(p-1)}$, that is, by 
the maximal size of $\epsilon\in (0,c_1)$.  If $\epsilon<c_1$ and
$c_1-\epsilon$ is very small, then  
$\lambda>\lambda_{min}:= \left(\frac{c_0}{c_1}\right)^{1/(p-1)}$
and $\lambda$ can be chosen very close to $\lambda_{min}$. 

{\it Proof of Theorem 4.}
Let
\begin{equation}
\label{e35}
\mu(t)=\lambda (1+t)^\nu, \qquad \lambda, \nu=const>0.
\end{equation}
We will choose the constants $\lambda$ and $\nu$ later.
Inequality \eqref{e26} holds if
\begin{equation}
\label{e36}
\frac{c_0}{\lambda^{p-1} (1+t)^{(p-1)\nu}} +\frac {\nu}{1+t}\le \frac
{c_1}{(1+t)^{q_1}}, \qquad \forall t\ge 0.
\end{equation}
If
\begin{equation}
\label{e37}
q_1\le 1, \quad (p-1)\nu\ge q_1,
\end{equation}
then inequality \eqref{e36} holds if
\begin{equation}
\label{e38}
\frac {c_0}{\lambda^{p-1}} +\nu\le c_1.
\end{equation}
Let $\epsilon>0$ be an arbitrary small number. Choose
\begin{equation}
\label{e39}
\nu= c_1-\epsilon.
\end{equation}
Then inequality \eqref{e38}  holds if inequality \eqref{e30} holds. 
Inequality \eqref{e10}
holds because we have assumed in Theorem 4 that $\|u(0)\|\le \frac 1 
\lambda$.
Combining inequalities  \eqref{e30}, \eqref{e31} and \eqref{e11}, one 
obtains the desired estimate:
\begin{equation}
\label{e40}
0\le \|u(t)\|=g(t)\le \frac 1 {\lambda (1+t)^{c_1 -\epsilon}}, \qquad
\forall t\ge 0.
\end{equation}
Condition  \eqref{e30} holds for any fixed small $\epsilon>0$ if
$\lambda$ is sufficiently large. Condition  \eqref{e31} holds for any
fixed large $\lambda$ if $\|u_0\|$ is sufficiently small.

Theorem 4 is proved. \hfill $\Box$

Let us formulate a stability result in which we assume that 
$b(t)\not\equiv 0$.  
The function $b(t)$ has physical meaning of persistently acting 
perturbations.

{\bf Theorem 5.} {\it Let $b(t)\not\equiv 0$, conditions \eqref{e3}-
\eqref{e5} and \eqref{e33} hold, and 
\begin{equation}
\label{e41}
\beta(t)\le \frac {c_2}{(1+t)^{q_2}}, 
\end{equation}
where $c_2>0$ and $q_2>0$ are constants.
Assume that
\begin{equation}
\label{e42}
q_1\le \min\big(1, q_2-\nu, \nu(p-1)\big),
\end{equation}
and 
\begin{equation}
\label{e43}
c_2^{1-\frac 1 p}c_0^{\frac 1 p}(p-1)^{\frac 1 p}\frac p{p-1} +\nu\le c_1.
\end{equation}
Then problem \eqref{e1}-\eqref{e2} has a unique global solution $u(t)$,
and the following  estimate holds:
\begin{equation}
\label{e44}
\|u(t)\|\le \frac 1 {\lambda_0 (1+t)^\nu}, \qquad \forall t\ge 0,
\end{equation}
where $\lambda_0>0$ is a constant defined in \eqref{e49}.
}

{\it Proof of Theorem 5.} Let $g(t):=\|u(t)\|$. As in the proof
of Theorem 4, multiply \eqref{e1} by $u$, take the real part, use the 
assumptions of Theorem 5, and get the inequality:
\begin{equation}
\label{e45}
\dot{g}\le -\frac {c_1}{(1+t)^{q_1}}g +c_0g^p+\frac {c_2}{(1+t)^{q_2}}.
\end{equation}
Choose $\mu(t)$ by formula \eqref{e35}. Apply Theorem 1 to
inequality \eqref{e45}. Condition \eqref{e9} takes now the form
\begin{equation}
\label{e46}
\frac {c_0}{\lambda^{p-1} (1+t)^{(p-1)\nu}}+ \frac 
{\lambda c_2}{(1+t)^{q_2-\nu}}+ \frac \nu{1+t}\le \frac 
{c_1}{(1+t)^{q_1}}\quad \forall t\ge 0. 
\end{equation}
If assumption \eqref{e42} holds, then inequality \eqref{e46} holds
provided that it holds for $t=0$, that is, provided that
\begin{equation}
\label{e47}
\frac {c_0}{\lambda^{p-1}}+ \lambda c_2+ \nu \le c_1.
\end{equation} 
Condition  \eqref{e10} holds if
\begin{equation}
\label{e48}
g(0)\leq \frac 1 \lambda.
\end{equation} 
The function  $h(\lambda):=\frac {c_0}{\lambda^{p-1}}+ \lambda 
c_2$ attains its global minimum in the interval $[0,\infty)$
at the value 
\begin{equation}
\label{e49}
\lambda=\lambda_0:=\left(\frac {(p-1)c_0}{c_2}\right)^{1/p},
\ee
and this minimum
is equal to 
$$h_{min}=c_0^{\frac 1 p}c_2^{1-\frac 1 p}(p-1)^{\frac 1 
p}\frac p{p-1}.$$ 
Thus, substituting $\lambda=\lambda_0$ in formula \eqref{e47},  
one concludes that inequality \eqref{e47} holds if
the following inequality holds:
\begin{equation}
\label{e50}
c_0^{\frac 1 p}c_2^{1-\frac 1 p}(p-1)^{\frac 1 p}\frac p{p-1}+\nu\le c_1,
\end{equation}
while inequality \eqref{e48} holds if
\begin{equation}
\label{e51}
\|u(0)\|\le \frac 1 {\lambda_0}. 
\end{equation}
Therefore, by Theorem 1, if conditions \eqref{e50}-\eqref{e51} hold, then 
estimate \eqref{e11}
yields
\begin{equation}
\label{e52}
\|u(t)\|\le \frac 1 {\lambda_0 (1+t)^\nu}, \qquad \forall t\ge 0,
\end{equation}
where $\lambda_0$ is defined in \eqref{e49}.

Theorem 5 is proved. \hfill $\Box$

\newpage 

\end{document}